\documentclass[a4paper,12pt]{article}
\setlength{\textwidth}{16.4cm}
\setlength{\textheight}{23.4cm}
\setlength{\oddsidemargin}{-2mm}
\setlength{\topmargin}{-1cm}

\usepackage{latexsym,amssymb}
\usepackage{amsmath}
\usepackage[mathscr]{eucal}
\usepackage{epic,eepic}
\usepackage{color}
\usepackage{enumerate}

\if0
\usepackage[anchorcolor=blue,%
 bookmarks=true,%
 bookmarksnumbered=true,%
 colorlinks=true,%
 citecolor=cyan,%
 linkcolor=blue,%
 dvipdfmx%
]{hyperref}
\fi


\usepackage{amsthm}
\newtheorem{theorem}{Theorem}[section]
\newtheorem{proposition}[theorem]{Proposition}
\newtheorem{lemma}[theorem]{Lemma}

\theoremstyle{definition}

\theoremstyle{remark}
\newtheorem{remark}[theorem]{Remark}
\newtheorem{example}[theorem]{Example}

\makeatletter

\@addtoreset{equation}{section}
\makeatother

\newcommand{\N}{\mathbb{N}}
\newcommand{\R}{\mathbb{R}}
\newcommand{\Hyp}{\mathbb{H}}
\newcommand{\Sph}{\mathbb{S}}

\newcommand{\sJ}{\mathsf{J}}

\newcommand{\ve}{\varepsilon}
\newcommand{\del}{\partial}
\newcommand{\lra}{\longrightarrow}

\newcommand{\dd}{\mathrm{d}}

\newcommand{\ol}[1]{\overline{#1}}

\newcommand{\CAT}{\mathrm{CAT}}

\newcommand{\RCD}{\mathrm{RCD}}

\newcommand{\diam}{\mathrm{diam}}

\def\argmin{\mathop{\mathrm{arg\, min}}}

\begin{document}

\title{Discrete-time gradient flows for unbounded convex functions on Gromov hyperbolic spaces}

\author{Shin-ichi OHTA\thanks{
Department of Mathematics, Osaka University, Osaka 560-0043, Japan
({\sf s.ohta@math.sci.osaka-u.ac.jp});
RIKEN Center for Advanced Intelligence Project (AIP),
1-4-1 Nihonbashi, Tokyo 103-0027, Japan}}

\date{\today}
\maketitle

\begin{abstract}
In proper, geodesic Gromov hyperbolic spaces,
we investigate discrete-time gradient flows via the proximal point algorithm
for unbounded Lipschitz convex functions.
Assuming that the target convex function has negative asymptotic slope along some ray
(thus unbounded below),
we first prove the uniqueness of such a negative direction in the boundary at infinity.
Then, we show that the discrete-time gradient flow from an arbitrary initial point
diverges to that unique direction of negative asymptotic slope.
This is inspired by and generalizes results of Karlsson--Margulis and Hirai--Sakabe on nonpositively curved spaces
and a result of Karlsson concerning semi-contractions on Gromov hyperbolic spaces.
We also give an estimate of the rate of convergence
based on a contraction property for the proximal (resolvent) operator established in our previous work.
\end{abstract}


\section{Introduction}

This article is a continuation of \cite{Ohyp} concerning convex optimization on Gromov hyperbolic spaces.
We shall study the \emph{discrete-time gradient flow} for a convex function $f$
on a metric space $(X,d)$ built of the \emph{proximal} (or \emph{resolvent}) \emph{operator}
\begin{equation}\label{eq:Jf}
\sJ^f_{\tau}(x) :=\argmin_{y \in X} \bigg\{ f(y) +\frac{d^2(x,y)}{2\tau} \bigg\},
\end{equation}
where $\tau>0$ is the step size.
Iterating $\sJ^f_{\tau}$ is a well known scheme to construct a continuous-time gradient flow for $f$
in the limit as $\tau \to 0$.
Generalizations of the theory of gradient flows to convex functions on metric spaces
have been making impressive progress since 1990s,
including those on $\CAT(0)$-spaces \cite{AGSbook,Babook,Jo,Ma}
(see \cite{HH,Hi2,HS} for some applications to optimization theory),
$\CAT(1)$-spaces \cite{OP1,OP2},
Alexandrov spaces and the Wasserstein spaces over them \cite{Ly,Ogra,OP1,Pe,Sa},
and $\RCD(K,\infty)$-spaces \cite{St}.
These spaces are, however, all Riemannian in the sense that they exclude non-Riemannian Finsler manifolds
(in particular, non-inner product normed spaces).
In fact, despite great success for Riemannian spaces,
much less is known for non-Riemannian spaces (even for normed spaces);
we refer to \cite{Onc,OSnc} for the failure of the contraction property.
Motivated by this large gap and a fact that
some non-Riemannian Finsler manifolds can be Gromov hyperbolic (Example~\ref{ex:Ghyp}\eqref{it:Hil}),
we initiated investigation of convex optimization on Gromov hyperbolic spaces in \cite{Ohyp}
(see also \cite{Olln} for a related study on barycenters in Gromov hyperbolic spaces).

The \emph{Gromov hyperbolicity}, introduced in a seminal work \cite{Gr} of Gromov,
is a notion of negative curvature of large scale.
A metric space $(X,d)$ is said to be Gromov hyperbolic
if it is \emph{$\delta$-hyperbolic} for some $\delta \ge 0$ in the sense that
\begin{equation}\label{eq:Ghyp}
(x|z)_p \ge \min\{ (x|y)_p, (y|z)_p \} -\delta
\end{equation}
holds for all $p,x,y,z \in X$, where
\[ (x|y)_p:=\frac{1}{2} \big\{ d(p,x)+d(p,y)-d(x,y) \big\} \]
is the \emph{Gromov product}.
If \eqref{eq:Ghyp} holds with $\delta=0$,
then the quadruple $p,x,y,z$ is isometrically embedded into a tree.
Therefore, the $\delta$-hyperbolicity means that $(X,d)$ is close to a tree
up to local perturbations of size $\delta$ (cf.\ Example~\ref{ex:Ghyp}\eqref{it:tree}).
Admitting such local perturbations is a characteristic feature of the Gromov hyperbolicity.

In the investigation of gradient flows in Gromov hyperbolic spaces,
we employ discrete-time gradient flows of large time step $\tau$ (``giant steps''),
because of the inevitable local perturbations.
Precisely, for a convex function $f\colon X \lra \R$ and an arbitrary initial point $x_0 \in X$,
we study the behavior of recursive applications of the proximal operator \eqref{eq:Jf}:
\begin{equation}\label{eq:x_k}
x_k \in \sJ^f_{\tau}(x_{k-1}), \quad k \in \N.
\end{equation}
The resulting sequence $x_0, x_1, x_2, \ldots$ can be regarded as a discrete approximation
of a (continuous-time) gradient curve for $f$ starting from $x_0$.
If $f$ is bounded below, then it is natural to expect that $x_k$ converges to a minimizer of $f$;
we refer to, e.g., \cite{Ba,OP1} for such convergence results
in metric spaces with upper or lower curvature bounds.
In \cite[Theorem~1.1]{Ohyp}, we established that $x_k$ is closer to a minimizer than $x_{k-1}$,
up to an additional term depending on the hyperbolicity constant $\delta$.
The estimates in \cite[Theorems~1.1, 1.3]{Ohyp} can be thought of as contraction properties akin to tress,
and were the first contraction estimates
concerning gradient flows for convex functions on non-Riemannian spaces.

In this article, inspired by Hirai--Sakabe's recent work \cite{HS},
we consider the case where the target convex function is unbounded below.
Then, instead of convergence to a minimizer,
we study divergence to the steepest direction in the boundary at infinity.
Note that the structure of boundary at infinity $\del X$, constructed as equivalence classes of rays in $X$,
is well investigated under both nonpositive curvature and Gromov hyperbolicity.
This kind of divergent phenomenon has been established by Karlsson--Margulis \cite{KM}
for \emph{semi-contractions} $\phi \colon X \lra X$ (namely $d(\phi(x),\phi(y)) \le d(x,y)$)
on nonpositively curved metric spaces $(X,d)$.
Precisely, their \emph{multiplicative ergodic theorem} \cite[Theorem~2.1]{KM} asserts that,
for a complete, uniformly convex metric space $(X,d)$
of nonpositive curvature in the sense of Busemann and almost every $x_0 \in X$,
there exists a ray $\xi \colon [0,\infty) \lra X$ such that
\[ \lim_{k \to \infty} \frac{d(\xi(\alpha k),x_k)}{k} =0, \]
where $x_k:=\phi(x_{k-1})$, provided that $\alpha:=\lim_{k \to \infty}d(x_0,x_k)/k >0$.
We refer to \cite[Proposition~4.2]{CL} for an application to
continuous-time gradient flows for Lipschitz convex functions on $\CAT(0)$-spaces,
and to \cite[Proposition~5.1]{Ka} for a generalization to semi-contractions on Gromov hyperbolic spaces.

The multiplicative ergodic theorem in \cite{KM} is applicable to the proximal operator on $\CAT(0)$-spaces
(thanks to the semi-contractivity of $\sJ^f_\tau$; see \cite[Theorem~2.2.22]{Babook}),
while in \cite[\S 3.2]{HS} they also studied gradient descent in Hadamard manifolds.
The results in \cite{HS} extract and discretize convex optimization ingredients
of the \emph{moment-weight inequality} for reductive group actions
by Georgoulas--Robbin--Salamon \cite{GRS} in geometric invariant theory,
and have applications to operator scaling problems (see also more recent \cite{Hi3}).
In Gromov hyperbolic spaces, we employ the proximal operator with (large) step size $\tau$;
however, it is not a semi-contraction (see Remark~\ref{rm:nocont}).
Nonetheless, analyzing the behavior of divergence to the boundary at infinity
and using the contraction estimate \cite[Theorem~1.1]{Ohyp}, we establish the following.

\begin{theorem}[Divergence to the steepest direction]\label{th:main}
Let $(X,d)$ be a proper, geodesic $\delta$-hyperbolic space with $\del X \neq \emptyset$,
and $f\colon X \lra \R$ be an $L$-Lipschitz convex function for some $L>0$.
Assume that $\alpha :=-\inf_{v \in \del X} \del_\infty f(v)>0$.
Then, we have the following.
\begin{enumerate}[{\rm (i)}]
\item\label{it:v*}
There exists a unique element $v_* \in \del X$ satisfying $\del_\infty f(v_*)<0$.
\item\label{it:to-v*}
For any $x_0 \in X$,
the discrete-time gradient curve $(x_k)_{k \in \N}$ as in \eqref{eq:x_k} converges to $v_* \in \del X$.
\item\label{it:quan}
Moreover, letting $\xi\colon [0,\infty) \lra X$ be a ray representing $v_*$ with $\xi(0)=x_0$,
we have
\begin{equation}\label{eq:quan}
\bigl( \xi(t)|x_k \bigr)_{x_0} \ge  k\sqrt{\tau}
 \biggl\{ \biggl( \frac{\alpha^2}{4L} +\frac{\alpha^4}{16L^3} \biggr) \sqrt{\tau} -2\sqrt{2L\delta} \biggr\}
\end{equation}
for all $k \in \N$ and $t>0$ satisfying $f(\xi(t)) \le f(x_k)$.
\end{enumerate}
\end{theorem}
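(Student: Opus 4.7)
I would address the three assertions in the order (i), (iii), (ii), since uniqueness from (i) together with the quantitative bound (iii) yields convergence (ii) essentially for free.

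\emph{Uniqueness (i).} Argue by contradiction. Suppose $v_1 \neq v_2$ both lie in $\del X$ with $\del_\infty f(v_j) = -\alpha_j < 0$, and fix geodesic rays $\xi_j\colon [0,\infty) \lra X$ from a common base point $x_0$ representing $v_j$. The definition of $\del_\infty f$ gives $f(\xi_j(t)) \leq f(x_0) - \alpha_j t + o(t)$ as $t \to \infty$. By convexity along the geodesic from $\xi_1(t)$ to $\xi_2(t)$, the midpoint $m_t$ satisfies $f(m_t) \leq f(x_0) - (\alpha_1 + \alpha_2)t/2 + o(t)$, and then $L$-Lipschitzness forces $d(x_0, m_t) \geq (\alpha_1+\alpha_2)t/(2L) - o(t) \to \infty$. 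On the other hand, $v_1 \neq v_2$ means $(\xi_1(t)|\xi_2(t))_{x_0}$ stays bounded, and in a $\delta$-hyperbolic triangle the midpoint of a side lies within $O(\delta)$ of the ``center,'' which is itself at distance $\approx (\xi_1(t)|\xi_2(t))_{x_0}$ from $x_0$; so $d(x_0,m_t)$ is bounded, a contradiction.

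\emph{Quantitative estimate (iii).} The analytic input is the proximal optimality
\[
 f(x_k) + \frac{d(x_{k-1},x_k)^2}{2\tau} \leq f(y) + \frac{d(x_{k-1},y)^2}{2\tau}, \qquad y \in X,
\]
combined with the contraction estimate of \cite[Theorem~1.1]{Ohyp}, which supplies the $\delta$-corrected comparison needed outside the tree case. I would apply the optimality with $y$ running along the geodesic from $x_{k-1}$ toward $\xi(t)$. Under the hypothesis $f(\xi(t)) \leq f(x_k)$, together with $L$-Lipschitzness, the effective $f$-descent rate along this geodesic is essentially $\alpha$; optimizing the step size yields a per-iterate $f$-decrease of order $\tau\alpha^2/L$, with an additional $\alpha^4/L^3$ refinement accounting for the gap between asymptotic and pointwise slopes. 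Translating this analytic descent into a lower bound on the geometric increment $(\xi(t)|x_k)_{x_0} - (\xi(t)|x_{k-1})_{x_0}$ is where Gromov hyperbolicity contributes the error term $-\sqrt{L\delta\tau}$ per step. Telescoping the per-step bound over $k$ iterations then gives \eqref{eq:quan}.

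\emph{Convergence (ii).} Let $\xi$ be a ray from $x_0$ to $v_*$, which exists since $\del_\infty f(v_*) < 0$ by (i). For each $k$, the function $f \circ \xi$ tends to $-\infty$ while $f(x_k)$ is finite, so we can pick $t_k > 0$ with $f(\xi(t_k)) \leq f(x_k)$. Inequality \eqref{eq:quan} then yields $(\xi(t_k)|x_k)_{x_0} \to \infty$ at linear rate once $\tau$ is chosen large enough for the quantity in braces in \eqref{eq:quan} to be positive, which by the standard characterization of the visual boundary is precisely $x_k \to v_*$. Any other subsequential limit in $\del X$ would itself have $\del_\infty f < 0$ (by an argument like the one for (i) applied to the sequence), hence coincide with $v_*$ by uniqueness, so the whole sequence converges.

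\emph{Main obstacle.} The real work is (iii). As emphasized in the introduction, $\sJ^f_\tau$ is not a semi-contraction in the Gromov hyperbolic setting, so the Karlsson--Margulis argument cannot be invoked directly. The proof must instead quantify at each iteration a competition between proximal descent of size $\sim \alpha^2 \tau/L$ and a hyperbolic defect of size $\sim \sqrt{L\delta\tau}$. This balance explains both the ``giant step'' threshold $\tau \gg L^3\delta/\alpha^4$ under which \eqref{eq:quan} is informative and the mixed dependence on $\tau$ and $\sqrt{\tau}$ on the right-hand side. Obtaining the sharp coefficients $\alpha^2/(4L)$ and $\alpha^4/(16L^3)$ seems to require the refined contraction estimate of \cite[Theorem~1.1]{Ohyp} rather than the bare optimality inequality for $\sJ^f_\tau$.
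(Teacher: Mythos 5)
Your part (i) is essentially right and close in spirit to the paper's: both arguments rest on the fact that for distinct $v_1,v_2\in\del X$ the Gromov product $(\xi_1(t)|\xi_2(t))_{x_0}$ stays bounded, so by \eqref{eq:dxg} the geodesics joining $\xi_1(t)$ to $\xi_2(t)$ remain in a bounded set while convexity forces $f\to-\infty$ along them (the paper packages this as Lemma~\ref{lm:bridge}, extracting a bi-infinite geodesic along which $f$ is convex yet unbounded below in both directions). Two genuine gaps remain in (ii) and (iii).

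For (iii), the one concrete structural claim you make --- a per-step lower bound on the increment $(\xi(t)|x_k)_{x_0}-(\xi(t)|x_{k-1})_{x_0}$, followed by telescoping --- does not work. Writing that increment as $\tfrac12[d(x_0,x_k)-d(x_0,x_{k-1})]-\tfrac12[d(\xi(t),x_k)-d(\xi(t),x_{k-1})]$, the second bracket is indeed controlled by \eqref{eq:to-p} together with \eqref{eq:xxt>} (it is at most $-\alpha^2\tau/(2L)+4\sqrt{2\tau L\delta}$), but the first bracket admits no useful lower bound beyond $-d(x_{k-1},x_k)\ge-2\tau L$, and since $L\ge\alpha$ the resulting per-step estimate is negative. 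The paper avoids this by telescoping only the distances, $d(\xi(t),x_k)\le t-k\bigl(\alpha^2\tau/(2L)-4\sqrt{2\tau L\delta}\bigr)$ (which requires $f(\xi(t))\le f(x_j)$ for every $j\le k$, available because $f(x_j)$ is nonincreasing), and coupling this with the \emph{global} lower bound $d(x_0,x_k)\ge(f(x_0)-f(x_k))/L\ge k\alpha^4\tau/(8L^3)$ obtained from the accumulated decrease \eqref{eq:fxtau} and Lipschitz continuity; substituting both into the identity $d(\xi(t),x_k)=t+d(x_0,x_k)-2(\xi(t)|x_k)_{x_0}$ yields \eqref{eq:quan}. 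That global use of the function decrease is the missing idea, and it is also where the coefficient $\alpha^4/(16L^3)$ actually comes from.

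For (ii), the theorem asserts convergence for \emph{every} $\tau>0$, whereas deducing it from (iii) requires the brace in \eqref{eq:quan} to be positive, i.e.\ $\tau$ large as in \eqref{eq:tau}. The paper proves (ii) independently of (iii): from $f(x_k)\le f(x_0)-k\alpha^4\tau/(8L^2)$ and $d(x_0,x_k)\le 2k\tau L$ one gets $(f(x_k)-f(x_0))/d(x_0,x_k)\le-\alpha^4/(16L^3)$, so unit-speed geodesics from $x_0$ to $x_k$ along which $f$ is convex subconverge to a ray of asymptotic slope at most $-\alpha^4/(16L^3)<0$, which must represent $v_*$ by (i). Your closing remark that ``any other subsequential limit would have $\del_\infty f<0$'' is the right idea, but it is not an instance of the argument for (i); it needs precisely this slope computation along the geodesics $x_0x_k$, which should be made the primary argument rather than a fallback.
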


Here, $\del_\infty f(v)$ is the \emph{asymptotic slope} defined by
\[ \del_\infty f(v) :=\lim_{t \to \infty} \frac{f(\xi(t))}{t} \]
for a ray $\xi \colon [0,\infty) \lra X$ representing $v \in \del X$.
We remark that the uniqueness in \eqref{it:v*} is a specific feature of the negative curvature;
it is not the case for $\CAT(0)$-spaces (see Example~\ref{ex:Buse}).
Note also that \cite[\S 3.2]{HS} includes finer analysis for gradient descent
under a concavity condition on $f$ (called the \emph{$L$-smoothness}).

The right hand side of \eqref{eq:quan} is positive (and hence \eqref{eq:quan} is nontrivial) if
\begin{equation}\label{eq:tau}
\tau > \frac{2^{11} L^7}{(4L^2 +\alpha^2)^2 \alpha^4}\delta.
\end{equation}
Since $L \ge \alpha$, for example,
\[ \tau > \frac{82L^7}{\alpha^8}\delta \]
is sufficient for \eqref{eq:tau}.
Then, \eqref{eq:quan} yields a quantitative estimate for the divergence of $(\xi(t)|x_k)_{x_0}$ to infinity,
that can be regarded as an estimate of the rate of convergence of $x_k$ to $v_*$ (see Remark~\ref{rm:bdry}).

\begin{remark}[On Lipschitz continuity]\label{rm:Lip}
It follows from the convexity of $f$ that the (closed) sublevel set $X':=f^{-1}((-\infty,f(x_0)])$ is convex,
and hence $(X',d)$ is also a proper, geodesic $\delta$-hyperbolic space.
For this reason, in Theorem~\ref{th:main}, it is in fact sufficient to assume that $f$ is $L$-Lipschitz on $X'$.
\end{remark}

This article is organized as follows.
After preliminaries in Section~\ref{sc:pre} for Gromov hyperbolic spaces,
Section~\ref{sc:cvx} is devoted to analysis of the asymptotic slope
of unbounded convex functions, including the proof of Theorem~\ref{th:main}\eqref{it:v*}.
In Section~\ref{sc:gf}, we study discrete-time gradient flows
and prove Theorem~\ref{th:main}\eqref{it:to-v*}, \eqref{it:quan}.

\section{Preliminaries for Gromov hyperbolic spaces}\label{sc:pre}

For $a,b \in \R$, we set $a \wedge b:=\min\{a,b\}$ and $a \vee b:=\max\{a,b\}$.
Besides Gromov's original paper \cite{Gr}, we refer to \cite{Bo,BH,DSU,Va}
for the basics and various applications of the Gromov hyperbolicity.

\subsection{Gromov hyperbolic spaces}\label{ssc:Ghyp}

Let $(X,d)$ be a metric space.
Recall from the introduction that the \emph{$\delta$-hyperbolicity} for $\delta \ge 0$ is defined by
\begin{equation}\label{eq:d-hyp}
(x|z)_p \ge (x|y)_p \wedge (y|z)_p -\delta
\end{equation}
for all $p,x,y,z \in X$, where
\[ (x|y)_p :=\frac{1}{2} \big\{ d(p,x) +d(p,y) -d(x,y) \big\} \]
is the \emph{Gromov product}.
Since the triangle inequality implies
\[ 0 \le (x|y)_p \le d(p,x) \wedge d(p,y), \]
the Gromov product does not exceed the diameter $\diam(X):=\sup_{x,y \in X}d(x,y)$.
Hence, if $\diam(X) \le \delta$, then $(X,d)$ is $\delta$-hyperbolic.
This also means that the local structure of $X$ (up to size $\delta$)
is not influential in the $\delta$-hyperbolicity.

Another important fact is that trees are $0$-hyperbolic; in this sense,
a $\delta$-hyperbolic space is close to a tree up to an additive constant $\delta$.
The Gromov hyperbolicity can also be regarded as a large-scale notion of negative curvature.
Let us mention some typical examples.

\begin{example}\label{ex:Ghyp}
\begin{enumerate}[(a)]
\item\label{it:CAT}
Complete, simply connected Riemannian manifolds of sectional curvature $\le -1$
(or, more generally, \emph{$\CAT(-1)$-spaces})
are Gromov hyperbolic (see, e.g., \cite[Proposition~H.1.2]{BH}).

\item\label{it:Hil}
An important difference between $\CAT(-1)$-spaces and Gromov hyperbolic spaces
is that the latter admits some non-Riemannian Finsler manifolds.
For instance, \emph{Hilbert geometry} on a bounded convex domain in the Euclidean space
is Gromov hyperbolic under mild convexity and smoothness conditions
(see \cite{KN}, \cite[\S 6.5]{Obook}).

\item\label{it:Tei}
For the \emph{Teichm\"uller space} of a surface of genus $g$ with $p$ punctures,
the \emph{Weil--Petersson metric} (which is incomplete, Riemannian)
is known to be Gromov hyperbolic if and only if $3g-3+p \le 2$ \cite{BF},
whereas the \emph{Teichm\"uller metric} (which is complete, Finsler)
does not satisfy the Gromov hyperbolicity \cite{MW}
(see also \cite[\S 6.6]{Obook}).

\item\label{it:disc}
The definition \eqref{eq:d-hyp} makes sense for discrete spaces,
and the Gromov hyperbolicity has found rich applications in group theory.
A discrete group whose Cayley graph satisfies the Gromov hyperbolicity
is called a \emph{hyperbolic group}; we refer to \cite{Bo,Gr}, \cite[Part~III]{BH}.

\item\label{it:tree}
Suppose that $(X,d)$ admits an isometric embedding $\phi \colon T \lra X$ from a tree $(T,d_T)$
such that the $\delta$-neighborhood $B(\phi(T),\delta)$ of $\phi(T)$ covers $X$.
Then, since $(T,d_T)$ is $0$-hyperbolic, we can easily see that $(X,d)$ is $6\delta$-hyperbolic.
\end{enumerate}
\end{example}

We call $(X,d)$ a \emph{geodesic space}
if any two points $x,y \in X$ are connected by a (minimal) \emph{geodesic}
$\gamma \colon [0,1] \lra X$ satisfying $\gamma(0)=x$, $\gamma(1)=y$,
and $d(\gamma(s),\gamma(t))=|s-t| d(x,y)$ for all $s,t \in [0,1]$.
In this case, there are a number of characterizations of the Gromov hyperbolicity,
most notably by the \emph{$\delta$-slimness} of geodesic triangles (see, e.g., \cite[\S III.H.1]{BH}).
We remark that, by \cite[Theorem~4.1]{BS},
every $\delta$-hyperbolic space can be isometrically embedded
into a complete, geodesic $\delta$-hyperbolic space.

In a $\delta$-hyperbolic space,
for any $x,y,z \in X$ and any geodesic $\gamma \colon [0,1] \lra X$ from $y$ to $z$, we have
\begin{equation}\label{eq:dxg}
d(x,\gamma) -2\delta \le (y|z)_x \le d(x,\gamma),
\end{equation}
where $d(x,\gamma) :=\min_{s \in [0,1]} d(x,\gamma(s))$ (see \cite[2.33]{Va}).
As a corollary, one can readily see the following; we give a proof for completeness
(cf.\ \cite[Lemma~III.H.1.15]{BH}).

\begin{lemma}\label{lm:2geod}
Let $(X,d)$ be a $\delta$-hyperbolic space and $x,y \in X$.
Then, for any two geodesics $\gamma,\eta \colon [0,1] \lra X$ from $x$ to $y$, we have
\[ d\bigl( \gamma(s),\eta(s) \bigr) \le 4\delta \]
for all $s \in [0,1]$.
\end{lemma}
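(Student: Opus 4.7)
The plan is to exploit the estimate \eqref{eq:dxg}, which bounds the Gromov product at a point by its distance to a geodesic, together with the elementary observation that for a point on a geodesic between two others, the corresponding Gromov product vanishes. Fix $s \in [0,1]$ and abbreviate $p:=\gamma(s)$. Since $p$ lies on a geodesic from $x$ to $y$, one computes directly $d(x,p)+d(p,y)=d(x,y)$, so $(x|y)_p=0$.

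Next, I apply \eqref{eq:dxg} with the base point $p$ and the geodesic $\eta$ joining $x$ and $y$: this yields
\[
d(p,\eta) -2\delta \le (x|y)_p =0,
\]
so $d(p,\eta) \le 2\delta$. Hence, since the image of $\eta$ is compact (or simply by an infimum argument if $X$ is not assumed proper), there exists $t \in [0,1]$ with $d\bigl(\gamma(s),\eta(t)\bigr) \le 2\delta$.

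The remaining step is to replace $\eta(t)$ by $\eta(s)$. For this, observe that both geodesics are parametrized proportionally to arc length on $[0,1]$, so $d(x,\gamma(s))=s\,d(x,y)$ and $d(x,\eta(t))=t\,d(x,y)$. The triangle inequality applied at $x$ gives
\[
|s-t|\, d(x,y) = \bigl| d(x,\gamma(s)) - d(x,\eta(t)) \bigr| \le d\bigl(\gamma(s),\eta(t)\bigr) \le 2\delta,
\]
and therefore $d(\eta(s),\eta(t)) = |s-t|\, d(x,y) \le 2\delta$. Combining this with the previous bound via the triangle inequality,
\[
d\bigl(\gamma(s),\eta(s)\bigr) \le d\bigl(\gamma(s),\eta(t)\bigr) + d\bigl(\eta(t),\eta(s)\bigr) \le 2\delta + 2\delta = 4\delta,
\]
as desired.

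There is no real obstacle here: the argument is a two-step use of \eqref{eq:dxg} and the triangle inequality. The only minor subtlety is that the first step only controls $d(\gamma(s),\eta(t))$ for some $t$ a priori unrelated to $s$; this is resolved costlessly because distance from $x$ along either geodesic equals $s\,d(x,y)$ or $t\,d(x,y)$, forcing $|s-t|\,d(x,y)$ itself to be at most $2\delta$.
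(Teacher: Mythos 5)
Your proof is correct and follows essentially the same route as the paper's: both apply \eqref{eq:dxg} at the point $\gamma(s)$ (where $(x|y)_{\gamma(s)}=0$) to find a point $\eta(t)$ within $2\delta$, then use the triangle inequality at $x$ to force $|s-t|\,d(x,y)\le 2\delta$ and conclude. The only cosmetic difference is that the paper phrases the second step contrapositively (points $\eta(s'')$ with $|s-s''|\,d(x,y)>2\delta$ cannot realize the minimum), while you bound $|s-t|$ directly; the content is identical.
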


\begin{proof}
Fix $s \in [0,1]$ and take $s' \in [0,1]$ such that $d(\gamma(s),\eta) =d(\gamma(s),\eta(s'))$.
It follows from \eqref{eq:dxg} and $(x|y)_{\gamma(s)}=0$ that $d(\gamma(s),\eta(s')) \le 2\delta$.
For $s'' \in [0,1]$ with $|s-s''|d(x,y) >2\delta$, the triangle inequality implies
\[ d\bigl( \gamma(s),\eta(s'') \bigr) \ge \bigl| d\bigl( x,\gamma(s) \bigr) -d\bigl( x,\eta(s'') \bigr) \bigr|
 =|s-s''|d(x,y) >2\delta. \]
Hence, we find $|s-s'|d(x,y) \le 2\delta$ and
\[ d\bigl( \gamma(s),\eta(s) \bigr) \le d\bigl( \gamma(s),\eta(s') \bigr) +|s-s'|d(x,y)
 \le 2\delta +2\delta =4\delta. \]
\end{proof}

\subsection{Gromov boundary}\label{ssc:bdry}

Next, we introduce the \emph{Gromov boundary} $\del X$
of a proper, geodesic $\delta$-hyperbolic space $(X,d)$
(the \emph{properness} means that every bounded closed set is compact).
We refer to \cite[\S III.H.3]{BH} for further details,
as well as to \cite[\S 5]{Va} and \cite[\S 3.4]{DSU} for the more general non-proper, non-geodesic situation
(see Remark~\ref{rm:bdry} below).

A \emph{ray} $\xi\colon [0,\infty) \lra X$ is a geodesic of unit speed,
i.e., $d(\xi(s),\xi(t))=|s-t|$ for all $s,t \ge 0$.
Two rays $\xi,\zeta \colon [0,\infty) \lra X$ are said to be \emph{asymptotic} if
\[ \sup_{t \ge 0} d\bigl( \xi(t),\zeta(t) \bigr) <\infty. \]
Being asymptotic is an equivalence relation on the set of rays,
and we denote by $\del X$ the associated equivalence classes.
The equivalence class of a ray $\xi$ will be denoted by $\xi(\infty) \in \del X$.
For any $p \in X$ and $v \in \del X$, there exists a ray $\xi$ with $\xi(0)=p$ and $\xi(\infty)=v$
(see \cite[Lemma~III.H.3.1]{BH}).

We set $\ol{X}:=X \sqcup \del X$ (called the \emph{Gromov closure} or \emph{bordification} of $X$).
To endow $\ol{X}$ with a topology, we fix a point $p \in X$ and
consider geodesics $\xi \colon [0,l) \lra X$ of unit speed with $\xi(0)=p$.
If $l=\infty$, then $\xi$ is a ray.
If $l<\infty$, then we extend $\xi$ by letting $\xi(t):=\xi(l)$ for $t>l$, and put $\xi(\infty):=\xi(l)$.
In either case, we call $\xi\colon [0,\infty) \lra X$ a \emph{generalized ray}.
We say that two generalized rays $\xi,\zeta \colon [0,\infty) \lra X$ (emanating from $p$)
are equivalent if $\xi(\infty)=\zeta(\infty)$ (in $X$ or $\del X$).
Then, the set of equivalence classes of generalized rays is identified with $\ol{X}$.
A sequence $(\xi_i)_{i \in \N}$ of generalized rays is said to converge to
a generalized ray $\xi$ if $\xi_i$ converges to $\xi$ uniformly on each bounded interval in $[0,\infty)$.
This defines a topology of $\ol{X}$.
Note that, thanks to the properness, this topology restricted to $X$
coincides with the original topology of $X$ by the Arzel\`a--Ascoli theorem.

One can see that $\ol{X}$ is compact
(see \cite[Proposition~III.H.3.7]{BH}, \cite[Proposition~3.4.18]{DSU}).

\begin{proposition}\label{pr:cpt}
Let $(X,d)$ be a proper, geodesic Gromov hyperbolic space.
Then, $\ol{X}$ and $\del X$ are compact.
\end{proposition}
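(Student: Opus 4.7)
The plan is to prove compactness of $\ol{X}$ by sequential compactness and then deduce that $\del X$ is compact as a closed subset. Given a sequence in $\ol{X}$, I would use the Arzel\`a--Ascoli theorem on the representing generalized rays emanating from the fixed basepoint $p \in X$, together with the properness of $X$, to extract a subsequence of generalized rays converging uniformly on bounded intervals.

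More concretely, start with an arbitrary sequence $(v_i)_{i \in \N}$ in $\ol{X}$ and choose for each $i$ a generalized ray $\xi_i\colon [0,\infty) \lra X$ with $\xi_i(0)=p$ and $\xi_i(\infty)=v_i$. Each $\xi_i$ is $1$-Lipschitz, and for every fixed $T>0$ the image $\xi_i([0,T])$ lies in the closed ball $\ol{B}(p,T)$, which is compact by properness. Hence the family $\{\xi_i|_{[0,T]}\}_i$ is equicontinuous and pointwise precompact, and a standard diagonal application of Arzel\`a--Ascoli over $T = 1, 2, 3, \ldots$ produces a subsequence (still denoted $(\xi_i)$) converging uniformly on each bounded interval to some $1$-Lipschitz map $\eta\colon [0,\infty) \lra X$ with $\eta(0)=p$.

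The main obstacle, and the heart of the argument, will be to verify that the limit $\eta$ is equivalent to a genuine generalized ray, rather than an arbitrary $1$-Lipschitz curve. To this end I would track the lengths $l_i \in [0,\infty]$ of the geodesic portions of $\xi_i$ (so $\xi_i(t)=\xi_i(l_i)$ for $t \ge l_i$ when $l_i<\infty$) and pass to a further subsequence so that $l_i \to l^* \in [0,\infty]$. In the case $l^* = \infty$, for every $T$ eventually $d(p,\xi_i(t)) = t$ holds on $[0,T]$, and passing to the limit gives $d(p,\eta(t)) = t$ for all $t \ge 0$, so $\eta$ is itself a ray. In the case $l^* < \infty$, the same argument makes $\eta|_{[0,l^*]}$ a unit-speed geodesic, and for $t > l^*$ the estimate $d(\xi_i(l_i),\xi_i(l^*)) \le |l_i - l^*| \to 0$ combined with uniform convergence at $t=l^*$ yields $\xi_i(t) = \xi_i(l_i) \to \eta(l^*)$, whence $\eta(t) = \eta(l^*)$ for all $t > l^*$; thus $\eta$ is a generalized ray terminating at $\eta(l^*) \in X$. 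Either way $\eta$ represents some $v_* \in \ol{X}$, and by the very definition of the topology one has $v_i \to v_*$.

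This gives sequential compactness of $\ol{X}$; since the topology on $\ol{X}$ is metrizable in the proper, geodesic case (a standard fact about the visual bordification), compactness follows. Finally, $\del X$ inherits compactness by being closed in $\ol{X}$: if a sequence $v_i \in \del X$ converged to a point $x \in X$ in $\ol{X}$, the representing rays would converge uniformly on $[0,d(p,x)+1]$ to the finite geodesic from $p$ to $x$, but the ray condition $d(p,\xi_i(t)) = t$ at $t = d(p,x)+1$ would contradict $\xi_i(t) \to x$. The principal subtlety throughout is the bookkeeping of the lengths $l_i$ needed to identify $\eta$ with a generalized ray in the borderline case $l^* < \infty$; interestingly, the Gromov hyperbolicity plays no direct role here, only the properness and geodesicity of $X$.
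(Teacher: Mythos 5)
Your proof is correct. The paper does not prove this proposition itself but quotes it from the literature (citing \cite[Proposition~III.H.3.7]{BH} and \cite[Proposition~3.4.18]{DSU}), and your argument --- Arzel\`a--Ascoli plus a diagonal extraction on generalized rays from the basepoint, with the bookkeeping of the lengths $l_i$ to identify the limit as a generalized ray, and closedness of $\del X$ in $\ol{X}$ --- is exactly the standard proof found in those references, so there is nothing to correct. The only external input you use beyond properness and geodesicity is metrizability of $\ol{X}$ (to pass from sequential compactness to compactness), which the paper also records with references immediately after the statement.
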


Moreover, $\ol{X}$ is metrizable, though we will not use it
(see \cite[Exercise~III.H.3.18(4)]{BH}, \cite[Proposition~3.6.13]{DSU}).

\begin{remark}[Gromov sequences]\label{rm:bdry}
Alternatively, one can introduce $\del X$ by considering sequences $(x_i)_{i \in \N}$ in $X$
such that $\lim_{i,j \to \infty}(x_i|x_j)_p =\infty$, where $p \in X$ is an arbitrarily fixed point.
Such a sequence is called a \emph{Gromov sequence}.
Two Gromov sequences $(x_i)_{i \in \N}$ and $(y_i)_{i \in \N}$
are defined to be \emph{equivalent} if $\lim_{i \to \infty} (x_i|y_i)_p =\infty$.
Then, there exists a natural bijection from the equivalence classes of Gromov sequences to $\del X$
(see \cite[Lemma~III.H.3.13]{BH}).
Precisely, each Gromov sequence $(x_i)_{i \in \N}$ converges to a point in $\del X$,
and its inverse map is given by associating a ray $\xi$ with the Gromov sequence $(\xi(i))_{i \in \N}$.
We stress that, for non-proper, non-geodesic Gromov hyperbolic spaces,
these two notions of the boundary may not coincide
(see \cite[Remark~5.5]{Va}, \cite[Remark~3.4.4]{DSU}).
\end{remark}

\section{Unbounded convex functions}\label{sc:cvx}

Let $(X,d)$ be a proper, geodesic $\delta$-hyperbolic space,
and $f\colon X \lra \R$ be an $L$-Lipschitz function for some $L>0$
(i.e., $|f(x)-f(y)| \le Ld(x,y)$ for all $x,y \in X$).
We say that $f$ is (\emph{weakly, geodesically}) \emph{convex} if,
for any pair of points $x,y \in X$, there is a geodesic $\gamma\colon [0,1] \lra X$ from $x$ to $y$ such that
\begin{equation}\label{eq:conv}
f\big( \gamma(s) \big) \le (1-s)f(x) +sf(y)
\end{equation}
for all $s \in [0,1]$.
We remark that, iteratively choosing a midpoint satisfying \eqref{eq:conv},
one can actually find a geodesic $\gamma$ such that $f \circ \gamma$ is convex on $[0,1]$.

We are interested in the case where $\inf_X f=-\infty$.
Then, to investigate the asymptotic behavior of $f$ at infinity,
we shall utilize the Gromov boundary $\del X$.

\subsection{Asymptotic slope}\label{ssc:aslo}

Define the \emph{descending slope} of $f$ at $x \in X$ by
\[ |\nabla^- f|(x) :=\limsup_{y \to x} \frac{[f(x)-f(y)] \vee 0}{d(x,y)} \in [0,\infty]. \]
For $v \in \del X$ represented by a ray $\xi \colon [0,\infty) \lra X$,
we define the \emph{asymptotic slope}
\[ \del_\infty f(v) :=\lim_{t \to \infty} \frac{f(\xi(t))}{t}
 =\lim_{t \to \infty} \frac{f(\xi(t))-f(\xi(0))}{t} \in (-\infty,\infty]. \]
Note that, as a subsequential limit of geodesics $\gamma_i \colon [0,i] \lra X$ from $\xi(0)$ to $\xi(i)$
along which $f$ is convex, there is a ray $\zeta$ such that $\zeta(0)=\xi(0)$, $\zeta(\infty)=v$,
and $f \circ \zeta$ is convex (recall Lemma~\ref{lm:2geod}).
Then, the function
\begin{equation}\label{eq:aslo-}
t \,\longmapsto\, \frac{f(\zeta(t))-f(\zeta(0))}{t}
\end{equation}
is non-decreasing, thereby the limit as $t \to \infty$ indeed exists.
Moreover, since $\zeta$ is asymptotic to $\xi$ (i.e., $d(\xi(t),\zeta(t))$ is bounded), we have
\[ \biggl| \frac{f(\xi(t))}{t} -\frac{f(\zeta(t))}{t} \biggr|
 \le \frac{Ld(\xi(t),\zeta(t))}{t} \to 0 \]
as $t \to \infty$.
Thus, $\del_\infty f$ is well-defined.
In what follows, we will always choose a ray $\xi$ such that $f \circ \xi$ is convex.

A similar argument also yields the following.

\begin{lemma}\label{lm:bridge}
For any distinct $v_1,v_2 \in \del X$, there exists a geodesic $\xi\colon \R \lra X$
such that $f \circ \xi$ is convex, $\xi_-(\infty) =v_1$ and $\xi_+(\infty) =v_2$,
where we set $\xi_+(t):=\xi(t)$ and $\xi_-(t):=\xi(-t)$ for $t \ge 0$.
\end{lemma}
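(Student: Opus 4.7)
The plan is to obtain $\xi$ as a subsequential limit of long geodesic segments whose endpoints run off to $v_1$ and $v_2$, along each of which $f$ is already convex. Concretely, I fix a basepoint $p \in X$ and, for each $i \in \N$, take approximating points $p_i:=\xi_1(i)$ and $q_i:=\xi_2(i)$, where $\xi_j \colon [0,\infty) \lra X$ are rays with $\xi_j(0)=p$ and $\xi_j(\infty)=v_j$. Using the iterative midpoint construction mentioned after \eqref{eq:conv}, I pick a geodesic segment $\eta_i$ from $p_i$ to $q_i$ along which $f$ is convex, and parametrize it by arc length on an interval $[-s_i,t_i]$ so that $\eta_i(0)$ is a point of $\eta_i$ closest to $p$.

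The decisive geometric ingredient is a uniform bound on $d(p,\eta_i(0))$. Since $v_1 \neq v_2$, the Gromov products $(p_i|q_i)_p$ stay bounded as $i \to \infty$; otherwise, the Gromov sequence characterization of $\del X$ (Remark~\ref{rm:bdry}) would force $v_1=v_2$. Combined with \eqref{eq:dxg}, this yields $d(p,\eta_i(0)) \le (p_i|q_i)_p +2\delta \le C$ for some constant $C$ independent of $i$, so by properness the base points $\eta_i(0)$ remain in a fixed compact set; at the same time, $s_i,t_i \to \infty$ because $d(p,p_i),d(p,q_i) \to \infty$ while $d(p,\eta_i(0))$ stays bounded.

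With equicontinuity ($\eta_i$ is $1$-Lipschitz) and the compact base, I would invoke Arzel\`a--Ascoli together with a diagonal argument to extract a subsequence converging locally uniformly on $\R$ to a unit-speed geodesic $\xi \colon \R \lra X$. The identification $\xi_+(\infty)=v_2$ (with $\xi_-(\infty)=v_1$ symmetric) is then a standard consequence of the construction: for each $t>0$ and large $i$, $\xi(t)$ lies near $\eta_i(t)$, which sits on the segment $\eta_i|_{[0,t_i]}$ from $\eta_i(0)$ to $q_i$; a further diagonal extraction makes these segments converge to a ray from $\xi(0)$ to $v_2$, and Lemma~\ref{lm:2geod} (applied between $\eta_i|_{[0,t_i]}$ and any nearby geodesic sharing its endpoints) forces $\xi|_{[0,\infty)}$ to remain within a bounded distance of that ray, hence asymptotic to it.

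Finally, convexity of $f \circ \xi$ on $\R$ is inherited as a pointwise limit of the convex functions $f \circ \eta_i$ on growing intervals exhausting $\R$, since $f$ is continuous and the convergence is locally uniform. The step I expect to be the main obstacle is the parametrization control — preventing $\eta_i(0)$ from drifting to infinity, so that the limit $\xi$ is a genuine bi-infinite geodesic rather than collapsing in one direction — which is exactly where the distinctness $v_1 \neq v_2$ enters the argument.
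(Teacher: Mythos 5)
Your proposal is correct and follows essentially the same route as the paper's proof: bound the distance from the basepoint to the connecting geodesics via the Gromov product and \eqref{eq:dxg}, use $v_1\neq v_2$ (through the Gromov sequence characterization) to keep that product bounded, and extract a subsequential limit of the unit-speed reparametrizations based at the closest point. The extra details you supply (Arzel\`a--Ascoli, identification of the endpoints at infinity, passage of convexity to the limit) are exactly the standard steps the paper leaves implicit.
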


\begin{proof}
Fix $p \in X$ and let $\zeta_1$ and $\zeta_2$ be rays representing $v_1$ and $v_2$
with $\zeta_1(0)=\zeta_2(0)=p$, respectively.
Given $t>0$, let $\gamma_t \colon [0,1] \lra X$ be a geodesic from $\zeta_1(t)$ to $\zeta_2(t)$
along which $f$ is convex, and $x_t$ be a point on $\gamma_t$ attaining $d(p,\gamma_t)$.
We deduce from \eqref{eq:dxg} that
\[ d(p,x_t) \le \bigl( \zeta_1(t)|\zeta_2(t) \bigr)_p +2\delta. \]
Moreover, since $v_1 \neq v_2$, there is $C>0$ such that $(\zeta_1(t)|\zeta_2(t))_p \le C$ for all $t>0$
(recall Remark~\ref{rm:bdry}).
This implies that $d(p,x_t) \le C+2\delta$ for all $t>0$.
Hence, $\gamma_t$ does not diverge to infinity,
and $\xi$ is obtained as a subsequential limit of a re-parametrization of $\gamma_t$
(e.g., with $\gamma_t(0)=x_t$ and of unit speed).
\end{proof}

We summarize necessary properties of the asymptotic slope $\del_\infty f$ in the next proposition
(cf.\ \cite[Lemma~3.2]{KLM}, \cite[Proposition~2.1]{HS} on $\CAT(0)$-spaces).

\begin{proposition}\label{pr:slope}
Let $(X,d)$ be a proper, geodesic $\delta$-hyperbolic space with $\del X \neq \emptyset$,
and $f\colon X \lra \R$ be an $L$-Lipschitz convex function.
\begin{enumerate}[{\rm (i)}]
\item\label{it:pos}
If $\inf_{v \in \del X} \del_\infty f(v)>0$,
then $f$ is bounded below and its minimum is attained at some point in $X$.

\item\label{it:slope}
We have
\begin{equation}\label{eq:wdual}
\inf_{x \in X} |\nabla^- f|(x) \ge -\inf_{v \in \del X} \del_\infty f(v).
\end{equation}
In particular, if $\inf_{v \in \del X} \del_\infty f(v)<0$, then $\inf_{x \in X} |\nabla^- f|(x)>0$.

\item\label{it:lsc}
$\del_\infty f \colon \del X \lra (-\infty,\infty]$ is lower semi-continuous.

\item\label{it:uniq}
In the case of $\inf_{v \in \del X} \del_\infty f(v)<0$, there exists unique $v_* \in \del X$
such that $\del_\infty f(v_*)<0$.
\end{enumerate}
\end{proposition}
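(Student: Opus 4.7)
The plan is to build all four parts on two ingredients already set up earlier in the section: (a) the monotonicity of the slope $t \mapsto [f(\zeta(t))-f(\zeta(0))]/t$ along any ray $\zeta$ for which $f\circ\zeta$ is convex (the construction used after \eqref{eq:aslo-}), and (b) the Arzel\`a--Ascoli theorem, available thanks to properness of $X$, which lets me extract subsequential limits of geodesic segments along which $f$ is convex.

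For (i), I would argue by contradiction. Fix $p \in X$. If $\inf_X f = -\infty$, choose $x_i$ with $f(x_i) \to -\infty$; the Lipschitz bound forces $d(p,x_i) \to \infty$. Take geodesics $\gamma_i : [0, d_i] \to X$ from $p$ to $x_i$ with $f\circ\gamma_i$ convex, and extract via Arzel\`a--Ascoli a subsequential limit $\xi : [0,\infty) \to X$; since $d_i \to \infty$, $\xi$ is a unit-speed ray emanating from $p$. Convexity along $\gamma_i$ gives
\[
\frac{f(\gamma_i(t_0)) - f(p)}{t_0} \;\le\; \frac{f(x_i) - f(p)}{d_i}
\]
whenever $t_0 \le d_i$, so letting $i \to \infty$ and then $t_0 \to \infty$ yields $\del_\infty f(\xi(\infty)) \le 0$, contradicting the hypothesis. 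The same extraction, now applied to a minimizing sequence for the (assumed finite) value $\inf_X f$, shows the minimum is attained: a bounded minimizing sequence converges by properness, while an unbounded one produces a ray along which the corresponding quotient tends to $0$, again contradicting positivity of $\del_\infty f$.

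For (ii), fix $x \in X$ and $v \in \del X$ and pick a ray $\xi$ from $x$ representing $v$ along which $f$ is convex. Monotonicity of $t \mapsto [f(\xi(t))-f(x)]/t$ gives
\[
\del_\infty f(v) \;\ge\; \lim_{t\to 0^+} \frac{f(\xi(t))-f(x)}{t} \;\ge\; -|\nabla^- f|(x),
\]
and \eqref{eq:wdual} follows by taking the supremum over $v$ and then the infimum over $x$. For (iii), given $v_i \to v$ in $\del X$, represent each $v_i$ by a ray $\xi_i$ from a fixed $p \in X$ with $f\circ\xi_i$ convex; Arzel\`a--Ascoli extracts a subsequential limit $\xi$ with $\xi(\infty) = v$ and $f\circ\xi$ convex (pointwise limits of convex functions are convex). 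For fixed $t > 0$, monotonicity gives $\del_\infty f(v_i) \ge [f(\xi_i(t))-f(p)]/t$; passing to the limits $i \to \infty$ and then $t \to \infty$ yields $\liminf_i \del_\infty f(v_i) \ge \del_\infty f(v)$.

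For (iv), existence of a minimizer $v_*$ is immediate from (iii) together with compactness of $\del X$ (Proposition~\ref{pr:cpt}), and gives $\del_\infty f(v_*) = \inf_v \del_\infty f(v) < 0$. For uniqueness, suppose $v_1 \ne v_2$ both satisfy $\del_\infty f(v_j) < 0$, and apply Lemma~\ref{lm:bridge} to obtain a bi-infinite geodesic $\xi : \R \to X$ with $\xi_-(\infty) = v_1$, $\xi_+(\infty) = v_2$, and $g := f\circ\xi$ convex on $\R$. Convexity on the line yields $\lim_{t \to -\infty} g(t)/t \le \lim_{t \to \infty} g(t)/t$, i.e., $-\del_\infty f(v_1) \le \del_\infty f(v_2)$, contradicting the strict negativity of both. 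The step I expect to require the most care is the ray extraction in (i): one must verify that the Arzel\`a--Ascoli limit is genuinely a unit-speed ray on all of $[0,\infty)$ and that a single template covers both the unbounded-below and unattained-infimum cases; everything else is a routine combination of slope monotonicity and (semi)continuity, and (iv) is the crispest place where genuine negative curvature (via Lemma~\ref{lm:bridge}) enters.
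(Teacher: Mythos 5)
Your proposal is correct and follows essentially the same route as the paper: the same compactness/Arzel\`a--Ascoli extraction of a limit ray for (i), the same slope-monotonicity comparison for (ii) and (iii) (you merely run the limit in (iii) from the rays $\xi_i$ toward a limit ray rather than from a fixed ray $\xi$ toward the $\xi_i$), and the same use of Lemma~\ref{lm:bridge} with convexity along a bi-infinite geodesic for the uniqueness in (iv).
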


\begin{proof}
\eqref{it:pos}
On the contrary, suppose that there is a sequence $(x_i)_{i \in \N}$ such that $f(x_i) \to -\infty$.
Then, by the Lipschitz continuity of $f$, $d(x_1,x_i) \to \infty$ necessarily holds.
Fix $p \in X$.
Since $f$ is convex, we have $f \le f(p) \vee f(x_i)$ on some geodesic from $p$ to $x_i$.
Thanks to the compactness of $\ol{X}$, as a subsequential limit of those geodesics,
we obtain a ray $\xi$ such that $\xi(0)=p$ and $f(\xi(t)) \le f(p)$ for all $t>0$.
Hence, $\del_\infty f(\xi(\infty)) \le 0$ holds, a contradiction.

The above argument actually shows that each sublevel set of $f$ is bounded.
Then, by the properness of $X$ and the continuity of $f$,
we can find a minimizer of $f$.

\eqref{it:slope}
Fix $x \in X$ and $v \in \del X$ represented by a ray $\xi$ with $\xi(0)=x$.
Then, we deduce from the convexity of $f$ along $\xi$ that
\[ |\nabla^- f|(x) \ge \lim_{t \to 0} \frac{f(x)-f(\xi(t))}{t} \ge \frac{f(x)-f(\xi(t))}{t} \]
for all $t>0$.
Letting $t \to \infty$ shows $|\nabla^- f|(x) \ge -\del_\infty f(v)$, which completes the proof.

\eqref{it:lsc}
Take $p \in X$ and a sequence $(v_i)_{i \in \N}$ in $\del X$ converging to some $v \in \del X$,
and let $\xi_i$ and $\xi$ be rays representing $v_i$ and $v$ with $\xi_i(0)=\xi(0)=p$, respectively.

We first assume that $\del_\infty f(v)<\infty$.
By the definition of $\del_\infty f(v)$, for any $\ve>0$, we have
\[ \frac{f(\xi(t_0)) -f(p)}{t_0} > \del_\infty f(v) -\ve \]
for some $t_0>0$.
Since $f$ is continuous and $\xi_i$ uniformly converges to $\xi$ on each bounded interval,
we find $N \in \N$ such that, for all $i \ge N$,
\[ \frac{f(\xi_i(t_0)) -f(p)}{t_0} > \del_\infty f(v) -2\ve. \]
The monotonicity of the function \eqref{eq:aslo-} then implies $\del_\infty f(v_i) >\del_\infty f(v)-2\ve$,
and hence
\[ \liminf_{i \to \infty} \del_\infty f(v_i) \ge \del_\infty f(v) -2\ve. \]
Letting $\ve \to 0$ completes the proof of the asserted lower semi-continuity:
\[ \liminf_{i \to \infty} \del_\infty f(v_i) \ge \del_\infty f(v). \]

When $\del_\infty f(v)=\infty$, the same argument shows that, for any $R>0$,
\[ \frac{f(\xi(t_0)) -f(p)}{t_0} \ge 2R \]
for some $t_0>0$, and
\[ \del_\infty f(v_i) \ge \frac{f(\xi_i(t_0)) -f(p)}{t_0} \ge R \]
for some $N \in \N$ and all $i \ge N$.
Therefore, we have $\liminf_{i \to \infty} \del_\infty f(v_i) \ge R$ and letting $R \to \infty$
yields $\liminf_{i \to \infty} \del_\infty f(v_i) =\infty$ as desired.

\eqref{it:uniq}
The existence is obvious, thereby it suffices to prove the uniqueness.
If there are distinct $v_1,v_2 \in \del X$ with $\del_\infty f(v_1) \vee \del_\infty f(v_2)<0$,
then, for a geodesic $\xi\colon \R \lra X$ as in Lemma~\ref{lm:bridge},
$f \circ \xi$ is convex and $\lim_{t \to \infty} f(\xi(t))=\lim_{t \to \infty} f(\xi(-t))=-\infty$.
This is impossible, thus we obtain the uniqueness.
\end{proof}

We remark that the uniqueness in \eqref{it:uniq} is a consequence of negative curvature
(cf.\ Example~\ref{ex:Buse}),
and then we readily find that $v_*$ satisfies $\del_\infty f(v_*) =\inf_{\del X} \del_\infty f$
(without applying the lower semi-continuity of $\del_\infty f$).

The inequality \eqref{eq:wdual} is called the \emph{weak duality} in \cite[Lemma~2.2]{HS}.
Then, the \emph{strong duality} means that $\inf_X |\nabla^- f|>0$ if and only if $\inf_{\del X} \del_\infty f<0$.
To show the only if part, in \cite[Lemma~3.4]{KLM} and \cite[Theorems~3.1, 3.7]{HS},
continuous-time gradient flows or a concavity condition on $f$ are used.
In fact, the proof of \cite[Lemma~3.4]{KLM} is applicable to the current setting with the help of \cite{Ly}.

\begin{proposition}[Strong duality]\label{pr:s-dual}
Let $(X,d)$ be a proper, geodesic metric space,
and $f\colon X \lra \R$ be an $L$-Lipschitz convex function.
Then, $\inf_X |\nabla^- f|>0$ if and only if $\del X \neq \emptyset$ and $\inf_{\del X} \del_\infty f<0$.
Moreover, in this case, we have
\[ \inf_X |\nabla^- f| =-\inf_{\del X} \del_\infty f. \]
\end{proposition}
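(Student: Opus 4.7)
The plan is to leverage the fact that the ``$\Leftarrow$'' direction of the equivalence, together with the inequality $\inf_X |\nabla^- f| \ge -\inf_{\del X} \del_\infty f$, is already contained in the weak duality of Proposition~\ref{pr:slope}\eqref{it:slope}: its proof used only the Lipschitz convexity of $f$ and the existence of a ray from each point, not the hyperbolicity. So the substance of the statement is the converse: assuming $c := \inf_X |\nabla^- f| > 0$, produce a ray $\xi$ with $\del_\infty f(\xi(\infty)) \le -c$; this simultaneously gives $\del X \neq \emptyset$, the ``$\Rightarrow$'' direction, and the reverse inequality.

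Following \cite[Lemma~3.4]{KLM} (replacing their Hadamard-space gradient flow by Lytchak's construction from \cite{Ly}, which extends to Lipschitz convex $f$ on proper geodesic $X$), I would first fix $x_0 \in X$ and produce a continuous-time gradient curve $\sigma\colon [0,\infty) \lra X$ for $f$ starting at $x_0$. Such a $\sigma$ is locally absolutely continuous with metric derivative $|\dot\sigma|(t) = |\nabla^- f|(\sigma(t))$ a.e., and satisfies the energy identity $f(x_0) - f(\sigma(t)) = \int_0^t |\nabla^- f|^2(\sigma(s))\, \dd s$. Combining this with $|\nabla^- f| \ge c$ yields
\[ f(x_0) - f\bigl( \sigma(t) \bigr) \,\ge\, c \int_0^t |\dot\sigma|(s) \, \dd s \,\ge\, c \cdot d\bigl( x_0, \sigma(t) \bigr), \]
so $f$ decreases by at least $c$ per unit distance along $\sigma$; in particular $L_k := d(x_0,\sigma(t_k)) \to \infty$ along any sequence $t_k \to \infty$. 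For each $k$, take a unit-speed geodesic $\gamma_k\colon [0,L_k] \lra X$ from $x_0$ to $\sigma(t_k)$ along which $f$ is convex; then convexity combined with the previous display gives $f(\gamma_k(s)) \le f(x_0) - cs$ for all $s \in [0,L_k]$. Properness of $X$ and Arzel\`a--Ascoli (applied on each bounded interval and diagonalized) extract a subsequence converging to a unit-speed ray $\xi\colon [0,\infty) \lra X$ with $\xi(0) = x_0$, and passing to the limit gives $f(\xi(s)) \le f(x_0) - cs$ for all $s \ge 0$. Dividing by $s$ and sending $s \to \infty$ shows $\del_\infty f(\xi(\infty)) \le -c$, as required.

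The main obstacle is the existence of the gradient curve $\sigma$ with the exact energy identity on a proper geodesic metric space carrying no curvature hypothesis; this is precisely where \cite{Ly} is invoked. Once it is in hand, the rest---convex interpolation along geodesics to $\sigma(t_k)$ plus an Arzel\`a--Ascoli diagonal extraction---is routine, and notably does not require uniqueness of the direction $v_*$ (a feature that would in any case follow from Proposition~\ref{pr:slope}\eqref{it:uniq} under the extra hyperbolicity assumption used elsewhere in the paper).
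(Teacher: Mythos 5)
Your proof is correct and follows essentially the same route as the paper: the weak duality of Proposition~\ref{pr:slope}\eqref{it:slope} gives one inequality, and the converse is obtained by running a descending curve produced via \cite{Ly} and then taking a subsequential limit of convex unit-speed geodesics to its far points. The only (harmless) difference is that you ask for a full gradient flow with the exact energy identity, whereas the paper invokes only the weaker statement of \cite[Lemma~6.1]{Ly} --- a $1$-Lipschitz curve $\eta$ with $\tfrac{\dd^+}{\dd t}[f\circ\eta](t)=-|\nabla^- f|(\eta(t))$ --- which already yields the estimate $f(\eta(0))-f(\eta(t))\ge t\inf_X|\nabla^- f|\ge d(\eta(0),\eta(t))\cdot\inf_X|\nabla^- f|$ that your argument actually uses, so you need not establish the stronger (and in this generality harder) energy-dissipation equality.
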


\begin{proof}
We remark that the boundary $\del X$ is defined
and the if part is seen in the same way as Proposition~\ref{pr:slope}\eqref{it:slope},
along with $\inf_X |\nabla^- f| \ge -\inf_{\del X} \del_\infty f$.

In the only if part,
note that $|\nabla^- f|$ is lower semi-continuous (see, e.g., \cite[Corollary~2.4.10]{AGSbook}).
Thus, we can apply \cite[Lemma~6.1]{Ly} (to $-f$) to find a $1$-Lipschitz curve
$\eta\colon [0,\infty) \lra X$ with
\[ \frac{\dd^+}{\dd t}[f \circ \eta](t) =-|\nabla^- f| \bigl( \eta(t) \bigr) \]
for all $t \ge 0$, where $\dd^+/\dd t$ denotes the right derivative.
It then follows that
\[ \frac{f(\eta(t))-f(\eta(0))}{d(\eta(0),\eta(t))}
 \le -\frac{t \cdot \inf_X |\nabla^- f|}{d(\eta(0),\eta(t))}
 \le -\inf_X |\nabla^- f|<0, \]
and $d(\eta(0),\eta(t)) \to \infty$ as $t \to \infty$ since $f$ is Lipschitz.
Hence, for each unit speed geodesic $\gamma_i \colon [0,d(\eta(0),\eta(i))] \lra X$
from $\eta(0)$ to $\eta(i)$ such that $f \circ \gamma_i$ is convex, we have
\[ \frac{f(\gamma_i(t))-f(\eta(0))}{t} \le -\inf_X |\nabla^- f| \]
for all $t \in (0,d(\eta(0),\eta(i))]$.
Therefore, a subsequential limit $\xi$ of $\gamma_i$ satisfies $\del_\infty f(\xi(\infty)) \le -\inf_X |\nabla^- f|$.
This completes the proof.
\end{proof}

\subsection{Hadamard spaces}\label{ssc:CAT0}

For the sake of comparison, here we briefly discuss the case of nonpositively curved spaces.
We refer to \cite{HS,KLM} for more details.

A geodesic space $(Y,d)$ is called a \emph{$\CAT(0)$-space} if,
for any $x,y,z \in Y$ and any geodesic $\gamma\colon [0,1] \lra Y$ from $y$ to $z$, we have
\[ d^2 \bigl( x,\gamma(s) \bigr) \le (1-s)d^2(x,y) +sd^2(x,z) -(1-s)sd^2(y,z) \]
for all $s \in [0,1]$ (in other words, $d^2(x,\cdot)$ is $2$-convex).
A complete $\CAT(0)$-space is called an \emph{Hadamard space}.
A complete, simply connected Riemannian manifold is a $\CAT(0)$-space
if and only if its sectional curvature is nonpositive everywhere (thus, an \emph{Hadamard manifold}).
Trees and Euclidean buildings are fundamental non-smooth examples of $\CAT(0)$-spaces.

For an Hadamard space $(Y,d)$, one can define $\del Y$
as the equivalence classes of rays in the same manner as Subsection~\ref{ssc:bdry}.
Moreover, $\del Y$ is equipped with a natural metric $\angle_T$ called the \emph{Tits metric},
which can be defined by
\[ 2\sin\biggl( \frac{\angle_T (v_1,v_2)}{2} \biggr) =\lim_{t \to \infty} \frac{d(\xi_1(t),\xi_2(t))}{t} \]
for rays $\xi_1,\xi_2$ associated with $v_1,v_2$.
We call $(\del Y,\angle_T)$ the \emph{Tits boundary}.
For example, when $Y$ is a Euclidean space $\R^n$, then its Tits boundary is isometric to $\Sph^{n-1}$.
The Tits boundary of a hyperbolic space $\Hyp^n$ is discrete
($\angle_T(v_1,v_2)=\pi$ for any distinct $v_1,v_2$).

Given a Lipschitz convex function $f \colon Y \lra \R$ on an Hadamard space,
the asymptotic slope $\del_\infty f \colon \del Y \lra (-\infty,\infty]$ can be defined
as in the previous subsection.
In this case, $\del_\infty f$ is Lipschitz with respect to $\angle_T$
and strictly convex on $\{ v \in \del Y \mid \del_\infty f(v)<0 \}$.
Therefore, when $\inf_{\del Y} \del_\infty f<0$,
we can find a unique minimizer $v_* \in \del Y$ of $\del_\infty f$,
similarly to Proposition~\ref{pr:slope} (see \cite[Lemma~3.2]{KLM}).

\begin{remark}[Recession functions]\label{rm:rec}
In \cite{HS},  $\del_\infty f$ (or its canonical extension to the Euclidean cone $C[\del Y]$)
is called a \emph{recession function} in connection with convex analysis.
\end{remark}

\begin{example}[Busemann functions]\label{ex:Buse}
Given a ray $\xi \colon [0,\infty) \lra Y$ in an Hadamard space,
define the associated \emph{Busemann function} $b_\xi \colon Y \lra \R$ by
\[ b_{\xi}(x) :=\lim_{t \to \infty} \bigl\{ d\bigl( x,\xi(t) \bigr) -t \bigr\}. \]
Inheriting the properties of the distance function, $b_\xi$ is $1$-Lipschitz and convex.
Note also that $b_\xi(\xi(t))=-t$.
A level (resp.\ sublevel) set of $b_\xi$ is called a \emph{horosphere} (resp.\ \emph{horoball}).
If $Y=\R^n$, then we have $b_\xi(x)=-\langle x-\xi(0),\dot{\xi}(0) \rangle$
with the Euclidean inner product $\langle \cdot,\cdot \rangle$,
and horospheres are hyperplanes perpendicular to (the extension of) $\xi$.
When $Y$ is a hyperbolic space of Poincar\'e model,
each horosphere of $b_\xi$ is drawn as a sphere tangent to the boundary at $\xi(\infty)$.
In either case, $v_*$ for $b_\xi$ is given by $\xi(\infty)$.
\end{example}

\section{Discrete-time gradient flows}\label{sc:gf}

As in Theorem~\ref{th:main},
let $(X,d)$ be a proper, geodesic $\delta$-hyperbolic space with $\del X \neq \emptyset$,
and $f\colon X \lra \R$ be an $L$-Lipschitz convex function.

\subsection{Proximal point algorithm}\label{ssc:PPA}

For $\tau>0$ and $x \in X$, the \emph{proximal}  (or \emph{resolvent}) \emph{operator} is defined by
\[ \sJ^f_{\tau}(x) :=\argmin_{y \in X} \bigg\{ f(y) +\frac{d^2(x,y)}{2\tau} \bigg\}. \]
Roughly speaking, an element in $\sJ^f_{\tau}(x)$ can be regarded as
an approximation of a point on the gradient curve of $f$ at time $\tau$ from $x$.
Note that $\sJ^f_{\tau}(x) \neq \emptyset$ by the properness of $(X,d)$
(see also the beginning of \cite[\S 3.1]{Ohyp}).

For any $x \in X$ and $x_\tau \in \sJ^f_{\tau}(x)$,
we infer from the $L$-Lipschitz continuity of $f$ that
\[ f(x_\tau) +\frac{d^2(x,x_\tau)}{2\tau} \le f(x) \le f(x_\tau) +Ld(x,x_\tau). \]
This implies
\begin{equation}\label{eq:xxt<}
d(x,x_\tau) \le 2\tau L.
\end{equation}
Using the convexity of $f$, we can also provide a lower bound of $d(x,x_\tau)$.

\begin{lemma}
Suppose that $\alpha :=-\inf_{\del X} \del_\infty f>0$.
Then, for any $x \in X$ and $x_\tau \in \sJ^f_{\tau}(x)$, we have
\begin{equation}\label{eq:xxt>}
d(x,x_\tau) \ge \bigl( L-\sqrt{L^2 -\alpha^2} \bigr) \tau \ge \frac{\alpha^2 \tau}{2L}.
\end{equation}
Moreover,
\begin{equation}\label{eq:fxtau}
f(x_\tau) \le f(x) -\frac{\alpha^4 \tau}{8L^2}.
\end{equation}
\end{lemma}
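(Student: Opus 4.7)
The plan is to test the minimization condition defining $x_\tau$ against a carefully chosen point along a ray to the steepest direction $v_*$, and then combine the resulting estimate with the Lipschitz condition on $f$. The main quantitative input is an upper bound $f(\xi(t)) \le f(x) - \alpha t$ along that ray, which will be fed into $f(x_\tau) + d^2(x,x_\tau)/(2\tau) \le f(\xi(t)) + t^2/(2\tau)$.

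First I would invoke Proposition~\ref{pr:slope}\eqref{it:uniq} to get the unique $v_* \in \del X$ with $\del_\infty f(v_*) < 0$. Since every other boundary point has nonnegative asymptotic slope, we automatically have $\del_\infty f(v_*) = \inf_{\del X} \del_\infty f = -\alpha$. Let $\xi\colon [0,\infty) \lra X$ be a ray with $\xi(0) = x$ and $\xi(\infty) = v_*$ along which $f \circ \xi$ is convex (such a ray exists by the subsequential limit argument already used in Section~\ref{ssc:aslo}). Because $t \mapsto (f(\xi(t)) - f(x))/t$ is nondecreasing with limit $-\alpha$, each of its values is at most $-\alpha$, giving $f(\xi(t)) \le f(x) - \alpha t$ for all $t \ge 0$.

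Next, I would test the minimizing inequality for $x_\tau$ against $y = \xi(\alpha \tau)$, which is the optimal choice that makes the right-hand side stationary in $t$:
\[ f(x_\tau) + \frac{d^2(x,x_\tau)}{2\tau} \le f\bigl(\xi(\alpha \tau)\bigr) + \frac{(\alpha\tau)^2}{2\tau} \le f(x) - \alpha^2 \tau + \frac{\alpha^2 \tau}{2} = f(x) - \frac{\alpha^2 \tau}{2}. \]
Combining with the Lipschitz lower bound $f(x_\tau) \ge f(x) - L d(x,x_\tau)$ and setting $r := d(x,x_\tau)$ produces the quadratic inequality $r^2 - 2\tau L r + \alpha^2 \tau^2 \le 0$, whose root set is $[\tau(L - \sqrt{L^2 - \alpha^2}), \tau(L + \sqrt{L^2 - \alpha^2})]$. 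This gives the first half of \eqref{eq:xxt>}; the second inequality $L - \sqrt{L^2 - \alpha^2} \ge \alpha^2/(2L)$ is an elementary algebraic consequence of squaring $L - \alpha^2/(2L) \ge \sqrt{L^2 - \alpha^2}$ (noting $L \ge \alpha$, which follows from the $L$-Lipschitz continuity of $f$ and the definition of $\del_\infty f$).

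Finally, for \eqref{eq:fxtau} I would plug the just-obtained bound $d(x,x_\tau) \ge \alpha^2 \tau/(2L)$ into the minimization inequality tested against $y = x$, namely $f(x_\tau) \le f(x) - d^2(x,x_\tau)/(2\tau)$, obtaining $f(x_\tau) \le f(x) - \alpha^4 \tau/(8L^2)$. There is no real obstacle here beyond the careful bookkeeping; the one subtlety worth highlighting is the identification $\del_\infty f(v_*) = -\alpha$, which is what promotes the uniqueness statement in Proposition~\ref{pr:slope}\eqref{it:uniq} into a sharp quantitative bound along the chosen test ray.
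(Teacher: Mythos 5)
Your proposal is correct and follows essentially the same route as the paper: both test the proximal inequality against $\xi(\alpha\tau)$ on a ray toward $v_*$ along which $f$ is convex, and both combine the resulting bound $f(x_\tau)+d^2(x,x_\tau)/(2\tau)\le f(x)-\alpha^2\tau/2$ with the $L$-Lipschitz estimate to obtain the quadratic constraint on $d(x,x_\tau)$ (the paper phrases this as excluding all $y$ with $d(x,y)<(L-\sqrt{L^2-\alpha^2})\tau$ from the argmin, which is the same computation), and then derive \eqref{eq:fxtau} by testing against $y=x$. No gaps.
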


We remark that, by \eqref{eq:wdual},
\[ L \ge \inf_X |\nabla^- f| \ge -\inf_{\del X} \del_\infty f =\alpha. \]

\begin{proof}
Let $v_* \in \del X$ be the unique element satisfying $\del_\infty f(v_*)=-\alpha$,
and $\xi$ be a ray with $\xi(0)=x$ and $\xi(\infty)=v_*$.
By the monotonicity of the function \eqref{eq:aslo-}, we have
\[ \frac{f(\xi(t))-f(x)}{t} \le -\alpha \]
for all $t>0$.
Since $d(x,\xi(t))=t$, the above inequality is rewritten as
\[ f\bigl( \xi(t) \bigr) +\frac{d^2(x,\xi(t))}{2\tau} \le f(x) -\alpha t +\frac{t^2}{2\tau}, \]
and the right hand side takes its minimum at $t=\alpha\tau$, yielding
\begin{equation}\label{eq:atau}
f\bigl( \xi(\alpha\tau) \bigr) +\frac{d^2(x,\xi(\alpha\tau))}{2\tau} \le f(x) -\frac{\alpha^2 \tau}{2}.
\end{equation}

Now, for any $y \in X$ with $d(x,y)<(L-\sqrt{L^2 -\alpha^2})\tau$,
we deduce from the $L$-Lipschitz continuity of $f$ that
\begin{align*}
f(y) +\frac{d^2(x,y)}{2\tau}
&\ge f(x) -Ld(x,y) +\frac{d^2(x,y)}{2\tau} \\
&= f(x) +\frac{1}{2\tau} \bigl\{ \bigl( L\tau -d(x,y) \bigr)^2 -L^2 \tau^2 \bigr\} \\
&> f(x) +\frac{1}{2\tau} \bigl\{ (L^2 -\alpha^2) \tau^2 -L^2 \tau^2 \bigr\} \\
&= f(x) -\frac{\alpha^2 \tau}{2}.
\end{align*}
Comparing this with \eqref{eq:atau} shows that $y \not\in \sJ^f_{\tau}(x)$,
thereby the former inequality $d(x,x_\tau) \ge (L-\sqrt{L^2 -\alpha^2})\tau$ in \eqref{eq:xxt>} necessarily holds.
The latter inequality in \eqref{eq:xxt>} is immediate.

The second assertion \eqref{eq:fxtau} follows from the choice of $x_\tau$ together with \eqref{eq:xxt>} as
\[ f(x) \ge f(x_\tau) +\frac{d^2(x,x_\tau)}{2\tau} \ge f(x_\tau) +\frac{\alpha^4 \tau}{8L^2}. \]
\end{proof}

We remark that the $\delta$-hyperbolicity was not used in the lemma above.
It will come into play via the following estimate from \cite{Ohyp} (we state only the case of $K=0$).

\begin{theorem}[\cite{Ohyp}]\label{th:to-p}
Let $(X,d)$ be a proper, geodesic $\delta$-hyperbolic space
and $f\colon X \lra \R$ be an $L$-Lipschitz convex function.
Then, for any $x \in X$, $y \in \sJ^f_{\tau}(x)$, and $p \in X$ with $f(p) \le f(y)$, we have
\begin{equation}\label{eq:to-p}
d(p,y) \le d(p,x) -d(x,y) +4\sqrt{2\tau L\delta}.
\end{equation}
\end{theorem}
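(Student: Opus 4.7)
My plan is to extract the first-order optimality of $y$ in the definition of $\sJ^f_\tau(x)$ by probing along a suitably chosen geodesic from $y$ to $p$, and then convert the resulting distance estimate into a bound on $d(y,p)$ via \eqref{eq:dxg}.

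First I would invoke the remark in Section~\ref{sc:cvx} to pick a geodesic $\gamma\colon[0,1]\to X$ from $y$ to $p$ along which $f\circ\gamma$ is genuinely convex. Since $f(p)\le f(y)$, convexity forces $f(\gamma(s))\le(1-s)f(y)+sf(p)\le f(y)$ for every $s\in[0,1]$. Plugging $\gamma(s)$ as a competitor into the defining inequality
\[ f(y)+\frac{d^2(x,y)}{2\tau}\le f(\gamma(s))+\frac{d^2(x,\gamma(s))}{2\tau} \]
of the proximal operator therefore yields $d(x,\gamma(s))\ge d(x,y)$ for every $s$. Combined with the trivial equality $d(x,\gamma(0))=d(x,y)$, this identifies $y$ as a closest point on $\gamma$ to $x$, so $d(x,\gamma)=d(x,y)$.

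Now \eqref{eq:dxg} applied to $\gamma$ yields $(y|p)_x\ge d(x,\gamma)-2\delta=d(x,y)-2\delta$, which after expanding the Gromov product rearranges to
\[ d(y,p)\le d(x,p)-d(x,y)+4\delta. \]
This is already sharper than the target whenever $\delta\le 2\tau L$, since then $4\delta\le 4\sqrt{2\tau L\delta}$. To cover the opposite regime $\delta>2\tau L$ I would instead invoke \eqref{eq:xxt<}, which gives $d(x,y)\le 2\tau L$; then the crude triangle inequality $d(y,p)\le d(x,p)+d(x,y)=d(x,p)-d(x,y)+2d(x,y)$ supplies the claim via $2d(x,y)\le 4\tau L\le 4\sqrt{2\tau L\delta}$ in that regime. (One could also package both cases uniformly by noting $\min\{4\delta,4\tau L\}\le 4\sqrt{\tau L\delta}$.)

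The one conceptually nontrivial step is recognizing that the probing argument needs only the sign $f(\gamma(s))-f(y)\le 0$, not any quantitative slope estimate: this mere sign is enough to push the whole geodesic $\gamma$ outside the closed ball of radius $d(x,y)$ around $x$, and that geometric fact about $\gamma$ is precisely the input that \eqref{eq:dxg} converts into the desired triangle-inequality-like estimate. Once this observation is made, splitting into the $\delta$-versus-$\tau L$ regimes is routine bookkeeping.
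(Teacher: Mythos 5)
Your argument is correct, and it is genuinely different from the paper's source for this statement: Theorem~\ref{th:to-p} is not proved here at all but imported from \cite{Ohyp}, and the form of the error term there ($4\sqrt{2\tau L\delta}$, a balance between the Lipschitz constant and the quadratic penalty) indicates an argument that probes the optimality of $y$ at a point on a geodesic from $x$ to $p$ and trades off $L\ve$ against $\ve^2/2\tau$. Your route instead probes along a geodesic $\gamma$ from $y$ to $p$ on which $f$ is convex: the sign information $f(\gamma(s))\le f(y)$ (from $f(p)\le f(y)$) together with the minimality of $y$ forces $d(x,\gamma(s))\ge d(x,y)$, hence $d(x,\gamma)=d(x,y)$, and \eqref{eq:dxg} then gives $(y|p)_x\ge d(x,y)-2\delta$, i.e.\ $d(p,y)\le d(p,x)-d(x,y)+4\delta$ unconditionally. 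Each step checks out against the lemmas as stated in the paper, and your two-regime comparison of $4\delta$ and $2d(x,y)\le 4\tau L$ against $4\sqrt{2\tau L\delta}$ is elementary and correct. What your approach buys is in fact a strictly sharper conclusion, $d(p,y)\le d(p,x)-d(x,y)+\min\{4\delta,\,4\tau L\}$, whose additive error does not degrade as $\tau\to\infty$; plugged into the proof of Theorem~\ref{th:main}\eqref{it:quan} it would replace $4\sqrt{2\tau L\delta}$ by $4\delta$ and substantially relax the step-size condition \eqref{eq:tau}. The one point you should verify before relying on the improvement is the constant in \eqref{eq:dxg} (various references state it with $2\delta$ or $4\delta$ depending on whether hyperbolicity is defined via the four-point condition or slim triangles); with the paper's stated version your proof is complete as written, and even with a worse constant the two-regime bookkeeping still recovers \eqref{eq:to-p}.
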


We remark that, in \cite[Theorem~1.1]{Ohyp}, $p$ was chosen as a minimizer of $f$;
however, by having a look on its proof, it is sufficient to assume $f(p) \le f(y)$.

\begin{remark}[No semi-contraction]\label{rm:nocont}
We also obtained a kind of contraction property in \cite{Ohyp},
whereas it does not imply that the proximal operator is a semi-contraction.
For $y_i \in \sJ^f_{\tau}(x_i)$ ($i=1,2$) and any minimizer $p \in X$ of $f$,
\cite[Theorem~1.3(ii)]{Ohyp} (with $K=0$) asserts that
\[ d(y_1,y_2) \le d(x_1,x_2) -(p|x_2)_{x_1} +C(L,D,\tau,\delta), \]
where $D:=d(p,x_1) \vee d(p,x_2)$, provided $d(p,y_1) \le d(p,y_2) \wedge (x_1|x_2)_p$.
If $x_2 \to x_1$, then $d(x_1,x_2) -(p|x_2)_{x_1} \to 0$,
but the additional term $C(L,D,\tau,\delta)$ caused by the $\delta$-hyperbolicity remains.
\end{remark}

\subsection{Proof of Theorem~\ref{th:main}}\label{ssc:proof}

We have shown \eqref{it:v*} in Proposition~\ref{pr:slope},
here we prove the remaining assertions \eqref{it:to-v*}, \eqref{it:quan}.

\begin{proof}[Proof of Theorem~$\ref{th:main}$]
\eqref{it:to-v*}
Fix an arbitrary initial point $x_0 \in X$ and recursively choose $x_k \in \sJ^f_{\tau}(x_{k-1})$ for $k \in \N$.
It follows from \eqref{eq:xxt<} and \eqref{eq:fxtau} that
\[ d(x_0,x_k) \le 2k\tau L, \qquad
 f(x_k) \le f(x_0) -k\frac{\alpha^4 \tau}{8L^2}, \]
and hence
\begin{equation}\label{eq:f/d}
\frac{f(x_k)-f(x_0)}{d(x_0,x_k)} \le -k\frac{\alpha^4 \tau}{8L^2} \frac{1}{2k\tau L}
 =-\frac{\alpha^4}{16L^3}.
\end{equation}
Note also that
\begin{equation}\label{eq:x0xk}
d(x_0,x_k) \ge \frac{f(x_0)-f(x_k)}{L} \ge k\frac{\alpha^4 \tau}{8L^3}.
\end{equation}
Let $\gamma_k \colon [0,d(x_0,x_k)] \lra X$ be a unit speed geodesic
from $x_0$ to $x_k$ on which $f$ is convex.
Then, we deduce from \eqref{eq:f/d} that
\[ \frac{f(\gamma_k(t))-f(x_0)}{t} \le \frac{f(x_k)-f(x_0)}{d(x_0,x_k)}
 \le -\frac{\alpha^4}{16L^3} \]
for all $t \in (0,d(x_0,x_k)]$, and \eqref{eq:x0xk} ensures that
$d(x_0,x_k) \to \infty$ as $k \to \infty$.
Therefore, a subsequence of $\gamma_k$ converges to a ray $\xi\colon [0,\infty) \lra X$
such that $\del_\infty f\bigl( \xi(\infty) \bigr) \le -\alpha^4/(16L^3)<0$,
and then $\xi(\infty)=v_*$ by Proposition~\ref{pr:slope}\eqref{it:uniq}.
Since the limit is unique, the original sequence $(x_k)_{k \in \N}$ also converges to $v_*$.

\eqref{it:quan}
Let $\xi\colon [0,\infty) \lra X$ be a ray with $\xi(0)=x_0$ and $\xi(\infty)=v_*$.
Observe that \eqref{eq:x0xk} implies
\begin{equation}\label{eq:d>}
d\bigl( \xi(t),x_k \bigr) =d\bigl( x_0,\xi(t) \bigr) +d(x_0,x_k) -2\bigl( \xi(t)|x_k \bigr)_{x_0}
 \ge t +k\frac{\alpha^4 \tau}{8L^3} -2\bigl( \xi(t)|x_k \bigr)_{x_0}
\end{equation}
for all $k \in \N$ and $t \ge 0$.
On the other hand, it follows from \eqref{eq:to-p} and \eqref{eq:xxt>} that,
for $t>0$ satisfying $f(\xi(t)) \le f(x_k)$ (note that $f(\xi(t)) \to -\infty$ as $t \to \infty$),
\[ d\bigl( \xi(t),x_k \bigr) -d\bigl( \xi(t),x_{k-1} \bigr)
 \le -d(x_{k-1},x_k) +4\sqrt{2\tau L\delta}
 \le -\frac{\alpha^2 \tau}{2L} +4\sqrt{2\tau L\delta}. \]
Therefore, for $t>0$ with $f(\xi(t)) \le f(x_k)$,
\[ d\bigl( \xi(t),x_k \bigr) \le d\bigl( \xi(t),x_0 \bigr)
 -k\biggl( \frac{\alpha^2 \tau}{2L} -4\sqrt{2\tau L\delta} \biggr). \]
Combining this with \eqref{eq:d>} yields
\[ k\biggl( \frac{\alpha^2 \tau}{2L} -4\sqrt{2\tau L\delta} \biggr)
 \le t-d\bigl( \xi(t),x_k \bigr)
 \le -k\frac{\alpha^4 \tau}{8L^3} +2\bigl( \xi(t)|x_k \bigr)_{x_0}, \]
thereby
\[ k\sqrt{\tau}
 \biggl\{ \biggl( \frac{\alpha^2}{2L} +\frac{\alpha^4}{8L^3} \biggr) \sqrt{\tau} -4\sqrt{2L\delta} \biggr\}
 \le 2\bigl( \xi(t)|x_k \bigr)_{x_0}. \]
This completes the proof.
\end{proof}

\subsection{Further problems}\label{ssc:prob}

We close the article with some further problems, including those discussed at the end of \cite{Ohyp}.

\begin{enumerate}[(A)]
\item
As we explained in Example~\ref{ex:Ghyp}\eqref{it:disc},
the Gromov hyperbolicity makes sense for discrete spaces as well.
Therefore, it is interesting to explore some generalizations of the results in this article and \cite{Ohyp}
to discrete (non-geodesic) Gromov hyperbolic spaces.
Then, it is a challenging problem to formulate and analyze
convex functions on discrete Gromov hyperbolic spaces
(possibly for some special classes such as hyperbolic groups).
We refer to \cite{Mu} for the theory of convex functions on $\mathbb{Z}^N$
(called \emph{discrete convex analysis}),
and to \cite{Hi1,Ko} for some generalizations to graphs and trees, respectively.

\item
Even in geodesic Gromov hyperbolic spaces,
it is worthwhile considering a certain ``large-scale convexity'' of functions,
preserved by \emph{quasi-isometries},
since the Gromov hyperbolicity is preserved by quasi-isometries between geodesic spaces
(see, e.g., \cite[Theorem~III.H.1.9]{BH}, \cite[Theorem~3.18]{Va}).

\item
Since \cite{HS,KLM,KM} are concerned with nonpositively curved spaces,
it is natural to expect that our results can be extended to some class of metric spaces
including both $\CAT(0)$-spaces and Gromov hyperbolic spaces,
probably defined through an appropriate relaxation (perturbation) of the $\CAT(0)$-condition.
\end{enumerate}

\textit{Acknowledgements.}
I would like to thank Hiroshi Hirai for stimulating discussions.
I am also grateful to an anonymous referee for suggesting Lemma~\ref{lm:bridge} and Proposition~\ref{pr:s-dual}
and for his/her valuable comment on the proof of Theorem~\ref{th:main}.
This work was supported in part by the JSPS Grant-in-Aid for Scientific Research (KAKENHI)
22H04942, 24K00523, 24K21511.

{\small

}

\end{document}